\documentclass[12pt]{amsart}

\usepackage{graphicx}

\usepackage{subcaption} 

\usepackage{tikz-cd}
\usepackage{hyperref}
\hypersetup{bookmarks=true,
	unicode=true,
	colorlinks=true,
	citecolor=black,
	linkcolor=black,
	urlcolor=black,
	plainpages=false,
	pdfpagelabels=true}

\usepackage{url}	
\allowdisplaybreaks 

\usepackage{pgf}

\usepackage{xcolor}

\usepackage{comment} 

\usepackage{tikz}
\usetikzlibrary{arrows}
\usetikzlibrary{decorations.markings,arrows,automata,arrows,backgrounds,snakes}
\usepackage{tikz-cd} 
\usepackage{pgf}
\usetikzlibrary{babel}

\usepackage{appendix}

\usepackage{mathrsfs}
\usepackage{dsfont}

\usepackage{mathtools}

\usepackage{accents}

\usepackage[shortlabels]{enumitem}
\usepackage{txfonts}

\newtheorem{theorem}{Theorem}[section]

\theoremstyle{definition}

\theoremstyle{remark}

\newtheorem{question}[theorem]{Question}

\usepackage{color}
\definecolor{darkgreen}{cmyk}{1,0,1,.2}
\definecolor{m}{rgb}{1,0.1,1}

\newdimen\theight
\def\TeXref#1{%
	\leavevmode\vadjust{\setbox0=\hbox{{\tt
				\quad\quad  {\small \textrm #1}}}%
		\theight=\ht0
		\advance\theight by \lineskip
		\kern -\theight \vbox to
		\theight{\rightline{\rlap{\box0}}%
			\vss}%
}}%

\subjclass[2020]{37C25, 27C30, 54H25, 55M20}

\begin{document}

\thanks{The author was partially supported by the grant ED431C 2023/31 (Xunta de Galicia, FEDER) and by Programa de axudas á etapa predoutoral da Xunta de Galicia.
 }
    
	\title{On Lefschetz's point-free periodicity}

	\author[A. Majadas-Moure 
	]{%
		Alejandro O. Majadas-Moure  
	}

	\address{
		Alejandro O. Majadas-Moure \\
		Departamento de Matemáticas, Universidade de Santiago de Compostela, Spain}
	\email{alejandro.majadas@usc.es}

	\begin{abstract} 
		We motivate two new approaches to the study of Lefschetz point-free periodicity. The first focusses on spaces satisfying the Wecken property. As an example, we study the bubble spaces. The second is a relative study of the Lefschetz point-free periodicity. This becomes important, for example, in the study of repellers of dynamical systems.
	\end{abstract}
	
	
	
	\maketitle
	\section{Introduction}
    It is an important question in dynamical systems to know when a map $f:X\rightarrow X$ has no periodic points. If $X$ is a compact simplicial complex or a compact ANR (we can consider, however, compact manifolds), a necessary condition for this is that all Lefschetz numbers of the iterates of $f$ vanish, that is, $L(f^m,X)=0$ for all $m\in\mathbb{Z}^+$. If for some $m$ we have $L(f^m,X)\neq 0$, then, the Lefschetz fixed point theorem implies that $f^m$ has a fixed point and then $f$ must have a periodic point.

    Many examples in the literature focus on the study of sufficient and/or necessary conditions to have $L(f^m,X)=0$ for each $m$, for example \cite{G-L, G-L2, G-L3, G-S-U, G-K-N-S, H-R, L, L-S1, L-S2, S2, S1}. 
    
    The relevance of this note lies not so much in the theorems as in justifying the importance of new approaches to the study of periodicity. The theorems we present here may be considered as the first examples of this new study, but many other results can derived along the same line.

    The new approaches that we would like to highlight follow from two different questions:
    
    First, in many cases in previous publications (see for example \cite{G-L, G-L2, G-L3, G-S-U, G-K-N-S, H-R, L, L-S1, L-S2, S2, S1}), the spaces that were studied didn't satisfy the Wecken property (this means that the Nielsen number of a map is the least number of fixed points in the homotopy class of the map), so we cannot even hope that $L(f,X)= 0$ implies that $f$ is fixed-point free. In particular, the study of bouquets of different spaces (for instance, spheres) has been intensively studied \cite{G-S-U, L-S1, L-S2, S1}, even if these spaces are very bad-behaved with the Wecken property since they have a global separating point. That is why, in Section~\ref{sec burbuja}, we suggest the use of bubble spaces, a kind of variation of the bouquets of spheres that is well-behaved with the Wecken property. For dimension greater than two, they satisfy this property and, above dimension $5$ it is more reasonable that they would satisfy that the \textit{Nielsen type number} for the $n$-iterate, $NF_n(f)$ (see \cite[Chapter 3]{Jiang}) equals $\mathrm{Min}\{\#\mathrm{Fix}(g^n)\,|\,g\simeq f\}$ \cite[Chapter III, Theorem 4.14]{Jiang}.

    In the setting of bubble spaces, we prove two theorems concerning necessary and sufficient conditions for $L(f^m,X)=0$ for all $m$. Theorem~\ref{thm primera generalizacion} is a generalization of \cite[Theorem 2.4 (c)]{S1} to bubble spaces, since its proof can be reduced to the bouquet case. Moreover, Theorem~\ref{thm respuesta a pregunta abta} is a more powerful result where we generalize the case of a finite bouquet of arbitrary spheres. This is closed related to Question (1) raised in \cite{S1}.

    In Section~\ref{seccion relativo}, we focus on the following question:

Given $f:(X,A)\rightarrow (X,A)$, are there necessary and/or sufficient conditions to ensure that $f$ has no periodic points in $\overline{X\setminus A}$? This is the second point of our new approach, since we also find it very interesting not only to know when a map is periodic free, but also when it is periodic free outside certain subspace. Among other applications, for example, this turns out to be useful when we want to know if a space is a repeller in a dynamical system.

Just in order to illustrate this new approach to the study of periodicity, in Section~\ref{seccion relativo} we present some examples that show a way to obtain some new results with this relative viewpoint. We start by generalizing some theorems in \cite{G-K-N-S}. Then we give a very illustrative necessary condition for a map in the sphere to have only periodic points in a ring neighborhood of the equator and, finally, in Theorem~\ref{thm ultimo teor}, we give an example of the kind of theorems we can obtain using a pair that has the relative Wecken property \cite[Theorem 6.7]{Zhao}. We insist that many other examples can follow using similar techniques.


    
In the whole paper, $X$ will be a connected simplicial complex. We will also use the Lefschetz zeta function:

\begin{equation*}
    \zeta_f(t):=\mathrm{exp}\left(\sum_{m\geq 1}\frac{L(f^m)}{m}t^m\right).
\end{equation*}
With the conditions of Section~\ref{sec burbuja}, it is proved in \cite[Proposition 5.13]{F} that this expression is equal to:
\begin{equation}\label{ecuacion zeta lefschetz}
    \zeta_f(t)=\prod_{k=0}^{n}\mathrm{det}(\mathrm{Id}_k-tf_{\ast k})^{(-1)^{k+1}}.
\end{equation}
\section{Bubble spaces}\label{sec burbuja}
In \cite{S1}, a detailled study of the property $L(f^m)=0$ is performed in bouquets of spheres (of the same or of different dimensions). The advantage of the sums in $\mathrm{TOP}^0$ (the category of pointed spaces) is that their (co)homology groups are very easy to compute from the (co)homology groups of the summands. However, the gluing point of the bouquet represents a notorious obstacle in Nielsen theory: it is a global separation point, and hence this poses a serious problem in terms of the Wecken property. A possible solution to this problem involves another way to glue spheres that turns out to be more appropriate in Nielsen theory: the \textit{bubble spaces}. The double-bubble space is a very important concept in geometric analysis. In general, in this note we will define a \textit{bubble space} as a recursive gluing of disks by injective maps of their boundary.

Formally, we start with a sphere $S^n$ (in order to avoid trivial cases we take $n\geq 2$). Next, we glue a disk $D^{n_1}$, $1\leq n_1\leq n$ through an imbedding $f_{n_1}:\partial D^{n_1}\rightarrow S^n$. Hence, the resulting space of this second step is the pushout of the following diagram:
\begin{equation*}
    D^{n_1}\hookleftarrow S^{n_1-1}\overset{f_{n_1}}{\rightarrow}S^n.
\end{equation*}
For the third step, we glue another disk $D^{n_2}$ either to $S^n\setminus \mathrm{Im}(f_{n_1})$ or to the interior of the disk $D^{n_1}$ (in this case, we will also ask $n_2\leq n_1$). The result of repeating this process a finite number of times is what we will call a bubble space. When considering the standard inclusion $S^n\subset\mathbb R^{n+1}$, we allow bubbles inside bubbles (that is, we can glue them both in the convex and the concave sides of the previous bubbles). Figure~\ref{espacio burbuja} illustrates this kind of spaces.

\begin{figure}[htb] 
   \centering
    \includegraphics[scale=0.35]{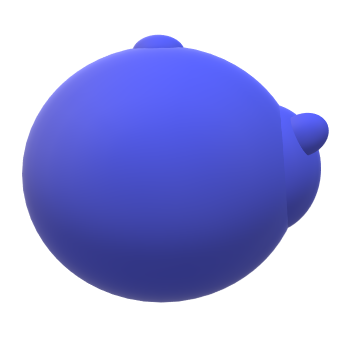} 
    \caption{A bubble space.}
   \label{espacio burbuja}
\end{figure}

Each bubble space $X$ will then consist in a sphere $S^n$ and a series of disks
\begin{equation*}
    D^{i_1}_1,\ldots,D^{i_1}_{k_{i_1}},\ldots, D^{i_m}_1,\ldots, D^{i_m}_{k_{i_m}},\;\text{with}\; i_1>\ldots>i_m>0.
\end{equation*}
Independently of the disk where each disk in glued to, an application of the Mayer-Vietoris theorem leads to the following homology groups with rational coefficients (the same idea works using cohomology):
        \[ H_l(X) = \begin{cases} 
          \mathbb{Q} & \text{if}\;l=n \\
          \mathbb{Q}^{k_{i_j}} & \text{if}\;l=i_j \\
          \mathbb{Q} & \text{if}\, l=0\\
          0 & \text{elsewhere}
       \end{cases}
    \]
if $n>i_1$, and
 \[ H_l(X) = \begin{cases} 
          \mathbb{Q}^{k_{i_1}+1} & \text{if}\;l=n, \\
          \mathbb{Q}^{k_{i_j}} & \text{if}\;l=i_j \;\text{and}\; j>1, \\
          \mathbb{Q} & \text{if}\, l=0,\\
          0 & \text{elsewhere}
       \end{cases}
    \]
    if $n=i_1$.
Since the homology (and also the homotopy class) is the same as in a bouquet of spheres, it is not very surprising that we can recover some of the results in \cite{S1}. Let us see, for example, a generalization of \cite[Theorem 2.4 (c)]{S1} with a different proof.
\begin{theorem}\label{thm primera generalizacion}
    Suppose that our bubble space $X$ consists of a sphere $S^{n_r}$ and $r-1$ disks $D^{n_1},\ldots,D^{n_{r-1}}$, with $n_1<,\ldots,<n_r$. Let $E=\{i\mid n_i \;\text{even}\}$ and $O=\{i\mid n_i \;\text{odd}\}$. Then $L(f^m,X)=0$ for all $m\in\mathbb{N^+}$ iff $\{(f_{\ast n_i})\mid i\in O\}$ is a permutation of $\{1\}\cup\{(f_{\ast n_i})\,|\,i\in E\}$, where $(f_{\ast n_i})$ is the eigenvalue of the map induced by $f$ in $H_{n_i}(X)=\mathbb{Q}$.
\end{theorem}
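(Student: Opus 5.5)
Because $L(f^m,X)$ depends only on the maps induced by $f^m$ on rational homology, we will reduce everything to the zeta function identity \eqref{ecuacion zeta lefschetz} combined with the homology computation above. The first step is to read the homology of $X$ off the Mayer--Vietoris computation. Since $n_1<\cdots<n_r$ are distinct and $n_{r-1}<n_r=n$, we are in the case $n>i_1$, so $H_l(X)=\mathbb{Q}$ for $l\in\{0,n_1,\dots,n_r\}$ and $H_l(X)=0$ otherwise. (If $n_1=0$ were allowed it would collide with $H_0$, but the construction forces $n_1\geq 1$.) Because $X$ is connected, $f_{\ast 0}=\mathrm{Id}$, so its eigenvalue is $1$. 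On each one-dimensional group $H_{n_i}(X)$ the induced map is multiplication by a rational number $a_i:=(f_{\ast n_i})$. Since $(f^m)_\ast=(f_\ast)^m$, we obtain
\begin{equation*}
L(f^m,X)=1+\sum_{i=1}^r(-1)^{n_i}a_i^m=\Big(1+\sum_{i\in E}a_i^m\Big)-\sum_{i\in O}a_i^m .
\end{equation*}

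The second step is to translate the condition $L(f^m,X)=0$ for all $m\geq 1$ into an identity of power sums of two multisets, $A=\{1\}\cup\{a_i\mid i\in E\}$ and $B=\{a_i\mid i\in O\}$, namely $p_m(A)=p_m(B)$ for every $m\geq 1$. The cleanest route is through \eqref{ecuacion zeta lefschetz}. The vanishing of all Lefschetz numbers is equivalent to $\zeta_f(t)\equiv 1$, and here
\begin{equation*}
\zeta_f(t)=\frac{\prod_{i\in O}(1-a_it)}{(1-t)\prod_{i\in E}(1-a_it)} .
\end{equation*}
So the condition is equivalent to the polynomial identity $(1-t)\prod_{i\in E}(1-a_it)=\prod_{i\in O}(1-a_it)$ in $\mathbb{Q}[t]$.

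The third step, and the only genuinely nontrivial one, is to deduce from this polynomial identity that the multisets agree. One must be careful with zero eigenvalues, because the factors $1-0\cdot t=1$ are invisible in the polynomial. Equality of the two polynomials means that the reciprocals of their roots, counted with multiplicity, agree; that is, the nonzero elements of $A$ and of $B$ coincide as multisets. Zeros, however, can be padded on either side without affecting the identity. The statement's ``permutation'' must therefore be read as equality of multisets up to zero eigenvalues, or else it requires equal cardinalities $|O|=|E|+1$. We would either add this caveat or observe that the claimed equivalence holds exactly when zeros are disregarded, matching \cite[Theorem 2.4 (c)]{S1}. The converse direction is immediate: if $B$ is a rearrangement of $A$, then $p_m(A)=p_m(B)$ for all $m$, so every $L(f^m,X)$ vanishes.

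If we prefer to avoid the zeta function, the power-sum route also works. Equal power sums $p_m$ for all $m\geq1$ of two finite multisets of complex numbers force the nonzero parts to coincide. One way to see this is Newton's identities applied after padding both multisets with zeros to a common length; another is the generating function $\sum_m p_m t^m/m=-\log\prod(1-x t)$, which is just the zeta argument again. Either way, this multiset-uniqueness step is where the proof's real content lies.
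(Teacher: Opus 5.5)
Your proof is correct and follows the paper's own argument: both use \eqref{ecuacion zeta lefschetz} to write $\zeta_f(t)=\prod_{i\in O}(1-a_it)/\bigl((1-t)\prod_{i\in E}(1-a_it)\bigr)$ and compare the two polynomials. Your caveat about zero eigenvalues is also justified. The paper's ``follows immediately'' passes over it, and the literal statement can fail when $|O|\neq|E|+1$: take $S^3$ with one disk $D^2$, so $X\simeq S^3\vee S^2$, and a map inducing $0$ on $H_2$ and $1$ on $H_3$. This matches the paper's general Theorem~\ref{thm respuesta a pregunta abta}, which is stated in terms of nonzero roots only.
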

\begin{proof}
    From \cite{G-L}, we know that $L(f^m,X)=0$ for all $m\in\mathbb{N^+}$ iff
    \begin{equation*}
    \zeta_f(t)=1
\end{equation*}
But, by Equation~\eqref{ecuacion zeta lefschetz} this means
\begin{equation*}
    \frac{\prod_{i\in O}(1-t(f_{\ast i}))}{(1-t)\cdot\prod_{i\in E}(1-t(f_{\ast i}))}=1,
\end{equation*}
and the result follows immediately.
\end{proof}
We can also obtain some interesting results in the general case where $X$ consists of a sphere $S^{n_l}$ and $k_j$ disks of dimension $n_j$ for $j\in\{1,\ldots,l\}$ with $n_j>n_{j-1}$ (we admit $k_{l}=0$). The homology of $X$ is 
 \[ H_i(X) = \begin{cases} 
          \mathbb{Q}^{k_{l}+1} & \text{if}\;i=n_l, \\
          \mathbb{Q}^{k_{j}} & \text{if}\;i=n_j \;\text{and}\; j<l, \\
          \mathbb{Q} & \text{if}\, i=0,\\
          0 & \text{elsewhere}
       \end{cases}
    \]
In order to simplify the notation, we rename $k_{l}+1$ as $k_{l}$. Then the condition
\begin{equation*}
    \zeta_f(t)=\prod_{i=0}^{n}\mathrm{det}(\mathrm{Id}_i-tf_{\ast i})^{(-1)^{i+1}}=1
\end{equation*}
becomes
\begin{equation}\label{cociente para autovalores}
    \frac{\prod_{n_j\;\text{odd}}(-1)^{k_j}t^{k_j}p_j(\frac{1}{t})}{(1-t)\cdot\prod_{n_j\;\text{even}}(-1)^{k_j}t^{k_j}p_j(\frac{1}{t})}=\frac{\prod_{n_j\;\text{odd}}(-1)^{k_j}t^{k_j}p_j(\frac{1}{t})}{(-1)(t-1)\cdot\prod_{n_j\;\text{even}}(-1)^{k_j}t^{k_j}p_j(\frac{1}{t})}=1,
\end{equation}
where $p_j$ is the characteristic polynomial of the map induced by $f$ in $H_{n_j}$.
This tool appears in \cite{L}. Let us now call $RO$ the union (with multiplicity) of the non-zero roots (in $\mathbb{C}$) of the polynomials $p_j$ with $j$ such that $n_j$ is odd and let $RE$ be the union (with multiplicity) of the non-zero roots of the polynomials $p_j$ with $j$ such that $n_j$ is even. Given a polynomial $a_nx^n+\cdots+ a_ix^i$ with $n>\cdots>i$ (maybe $i=0$) we will call $a_i$ the \textit{last term} of the polynomial. Note that the last term of the polynomial $p_j(t)$ will be the leading coefficient of the polynomial $t^{k_j}p_j(\frac{1}{t})$.

The following theorem, translated to the language of bouquets of spheres (note that the homology of the bouquets and the bubble spaces is quite similar) is related to question (1) raised in \cite{S1} and to \cite[Theorem 5]{L-S2}.
\begin{theorem}\label{thm respuesta a pregunta abta}
    Let $X$ consists of a sphere $S^{n_l}$ and $k_j$ disks of dimension $n_j$ for $j\in\{1,\ldots,l\}$ with $n_j>n_{j-1}$. Then $L(f^m,X)=0$ for all $m\in\mathbb{Z}^+$ iff the following two conditions are satisfied:
    \begin{enumerate}
        \item The product of $(-1)^{\sum_{j\mid n_j\,\textbf{odd}}k_j}$ and the last terms of the polynomials $p_j$ with $j$ such that $n_j$ is odd equals the product $(-1)^{1+\sum_{j\mid n_j\,\textbf{even}}k_j}$ and the last terms of the polynomials $p_j$ with $j$ such that $n_j$ is even.
        \item $RO$ is a permutation of $\{1\}\cup RE$.
    \end{enumerate}
\end{theorem}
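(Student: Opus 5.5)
As in the proof of Theorem~\ref{thm primera generalizacion}, I would start from \cite{G-L}: $L(f^m,X)=0$ for all $m\in\mathbb{Z}^+$ iff $\zeta_f(t)=1$. By Equation~\eqref{ecuacion zeta lefschetz} this is equivalent to the identity \eqref{cociente para autovalores}. After clearing denominators, \eqref{cociente para autovalores} becomes a single equality of polynomials in $t$:
\begin{equation*}
\prod_{n_j\;\text{odd}}(-1)^{k_j}t^{k_j}p_j(\tfrac{1}{t}) \;=\; (-1)(t-1)\prod_{n_j\;\text{even}}(-1)^{k_j}t^{k_j}p_j(\tfrac{1}{t}).
\end{equation*}
Call the two sides $P_O(t)$ and $P_E(t)$. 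The plan is to show that $P_O=P_E$ holds iff conditions (1) and (2) hold. The tool is that two nonzero polynomials over $\mathbb{C}$ coincide iff they have the same leading coefficient and the same multiset of roots.

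First I would work out the roots. Write $p_j(x)=x^{s_j}q_j(x)$ with $q_j(0)\neq 0$, so the last term of $p_j$ is $q_j(0)$. Then $t^{k_j}p_j(1/t)=t^{k_j-s_j}q_j(1/t)$, and this is a polynomial of degree $k_j-s_j$ whose roots are exactly the reciprocals $1/\lambda$ of the nonzero roots $\lambda$ of $p_j$, counted with multiplicity. Its leading coefficient is the last term of $p_j$, as the paper already observes.

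Next I would compare leading coefficients and roots on each side.
\begin{enumerate}
\item The leading coefficient of $P_O$ is $(-1)^{\sum_{n_j\,\text{odd}}k_j}$ times the product of the last terms of the $p_j$ with $n_j$ odd. The leading coefficient of $P_E$ is $(-1)^{1+\sum_{n_j\,\text{even}}k_j}$ times the product of the last terms of the $p_j$ with $n_j$ even. So equality of leading coefficients is exactly condition (1).
\item The root multiset of $P_O$ is $\{1/\lambda : \lambda\in RO\}$, and that of $P_E$ is $\{1\}\cup\{1/\lambda:\lambda\in RE\}$. Since inversion $\lambda\mapsto 1/\lambda$ is a bijection of $\mathbb{C}^*$ fixing $1$, these multisets agree iff $RO$ is a permutation of $\{1\}\cup RE$. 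This is condition (2).
\end{enumerate}
Together, (1) and (2) give $P_O=P_E$. Conversely, $P_O=P_E$ forces both conditions, and equal root multisets automatically give equal degrees.

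The main obstacle, though a mild one, is bookkeeping. I have to track the factors of $t$ coming from zero eigenvalues, which lower degrees but contribute no roots. I also have to keep the signs $(-1)^{k_j}$ and the $-1$ from rewriting $1-t$ as $-(t-1)$ consistent with the statement. Finally, I must justify that clearing denominators is legitimate. This is fine because each factor is a polynomial with nonzero constant term $1$ (it equals $\det(\mathrm{Id}-tf_{\ast})$), so it is a nonzero polynomial, and the equality of rational functions is equivalent to the polynomial identity above.
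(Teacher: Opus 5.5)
Your proposal is correct and follows the paper's own proof. Like the paper, you reduce $\zeta_f=1$ to the polynomial identity and then compare root multisets (reciprocals of the nonzero eigenvalues, which gives (2)) and leading coefficients (signed last terms, which gives (1)); your factorization $p_j=x^{s_j}q_j$ and your justification for clearing denominators only make explicit what the paper leaves implicit. As an aside, both sides have constant term $1$, so equal root multisets already force equal leading coefficients, which means condition (1) is in fact implied by condition (2).
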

\begin{proof}
    From \eqref{cociente para autovalores}, it suffices to find sufficient and necessary conditions for 
    \begin{equation*}
        \prod_{n_j\;\text{odd}}(-1)^{k_j}t^{k_j}p_j(\frac{1}{t})=(-1)(t-1)\cdot\prod_{n_j\;\text{even}}(-1)^{k_j}t^{k_j}p_j(\frac{1}{t}).
    \end{equation*}
    But two polynomials are the same iff they have the same roots and the same leading coefficient. Now, since the roots of $t^{j_j}p_j(\frac{1}{t})$ are the inverse of the non-zero roots of $p_j$, the condition of having the same set (with multiplicity) of roots is equivalent to condition (2). Finally, since the leading coefficient of $t^{k_j}p_j(\frac{1}{t})$ is the last term of $p_j$, and the leading coefficient of $\prod_{n_j\;\text{odd}}(-1)^{k_j}t^{k_j}p_j(\frac{1}{t})$ is the product of the last terms of the $p_j$'s with $n_j$ odd together with the corresponding $(-1)^{k_j}$'s, whereas the leading coefficient of $(-1)(t-1)\cdot\prod_{n_j\;\text{even}}(-1)^{k_j}t^{k_j}p_j(\frac{1}{t})$ is the product of the last terms of the $p_j$'s with $n_j$ even together with the corresponding $(-1)^{k_j}$'s and $(-1)$, having the same leading coefficient is equivalent to condition (1).  
\end{proof}
\section{Some ideas with the relative Lefschetz number}\label{seccion relativo}
A second idea that we would like to highlight is the use of relative fixed point theory in these problems about periodicity. Given a map $f:(X,A)\rightarrow (X,A)$, with $A\subset X$, we define the Lefschetz number of the pair $(X,A)$ (or the relative Lefschetz number) in the usual sense considering maps induced in the relative (co)homology groups $H_\ast(X,A)$ (the only requirement is that these spaces are finite-dimensional and  vanish in almost every dimension). Indeed, if both $L(f,X)$ and $L(f,A)$ are defined, we have $L(f,X,A)=L(f,X)-L(f,A)$ \cite[Property 4.4]{Gorniewicz}. 


The relative Lefschetz number is important because of the following result, that also admits some generalizations in non-compact spaces (for these generalizations see, for example, \cite{Gorniewicz}):
\begin{theorem}\label{thm pto fijo relativo}
    Let $X$ be a compact simplicial complex, $A\subset X$ a subcomplex and $f:(X,A)\rightarrow (X,A)$ a continuous map. If $L(f,X,A)\neq 0$, then $f$ has a fixed point in $\overline{X\setminus A}$. 
\end{theorem}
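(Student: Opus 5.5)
The plan is to argue by contrapositive. Assume $f$ has no fixed point in $\overline{X\setminus A}$, and show that then $L(f,X,A)=L(f,X)-L(f,A)=0$ (the identity quoted from \cite[Property 4.4]{Gorniewicz}). Since $X$ is a compact simplicial complex, $C:=\overline{X\setminus A}$ is compact. Compactness gives $\varepsilon>0$ with $d(x,f(x))\geq\varepsilon$ for all $x\in C$. The fixed point set $\mathrm{Fix}(f)$ is then contained in $A\setminus C$, which lies in the topological interior of $A$ in $X$.

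The first step is to deform $f$ by a small homotopy of pairs to a simplicial approximation $g:(X',A')\to(X,A)$. Here $X'$ is a fine iterated barycentric subdivision, $A'$ is the induced subdivision of $A$, and the mesh is much smaller than $\varepsilon$. We need two properties:
\begin{enumerate}
\item $g(A)\subset A$, which holds because $A$ is a subcomplex and the simplicial approximation respects carriers;
\item $g$ has no fixed points on simplices meeting $C$.
\end{enumerate}
The Lefschetz numbers $L(f,X)$ and $L(f,A)$ are unchanged along a homotopy of pairs, so $L(f,X,A)$ is unchanged as well.

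Next we apply the Hopf trace formula to the relative chain complex $C_\ast(X',A')\otimes\mathbb{Q}$, which computes $H_\ast(X,A)$. This gives
\begin{equation*}
L(f,X,A)=\sum_q(-1)^q\,\mathrm{tr}\big(g_\#:C_q(X',A')\to C_q(X',A')\big),
\end{equation*}
after passing to a chain map from $X'$ to itself via the standard subdivision chain homotopy equivalence. The chain group $C_q(X',A')$ has a basis consisting of the $q$-simplices $\sigma$ of $X'$ not contained in $A$. Every such simplex has an interior point outside $A$, so $\sigma$ meets $C$. A diagonal trace entry is nonzero only when $g(\sigma)$ is carried by $\sigma$ itself, and that would force a fixed point of $g$ in $\sigma$ (Brouwer), hence near $C$. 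This contradicts property (2). All diagonal entries therefore vanish, and so $L(f,X,A)=0$.

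I expect the main obstacle to be the bookkeeping in the simplicial approximation step. We must guarantee that when $g(\sigma)$ is carried by $\sigma$, the resulting fixed point forces a point $x\in\sigma$ with $d(x,f(x))$ small. This is ruled out by the choice of mesh, since $\sigma$ lies within the mesh of $C$ and $d(x,f(x))\ge\varepsilon/2$ near $C$ by uniform continuity. The subdivision/chain-map identification of the traces is the standard argument in the absolute Lefschetz theorem, and I would simply cite it in the relative setting.
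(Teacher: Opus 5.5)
The paper gives no proof of this statement; it quotes it as the standard relative Lefschetz fixed point theorem and points to G\'orniewicz's survey. Your outline is exactly the standard proof: contrapositive, a simplicial approximation that preserves $A$ and is homotopic to $f$ as a map of pairs, the Hopf trace formula on the relative chain complex, and vanishing of every diagonal entry indexed by a simplex not in $A$. Your observations that $g(A)\subset A$ (via carriers) and that the relative Lefschetz number is invariant under homotopies of pairs are correct. However, one step fails as you set it up.

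You take $g:(X',A')\to(X,A)$ and make only the domain fine. A simplicial approximation only guarantees that $g(x)$ lies in the closed carrier of $f(x)$ in the \emph{target} triangulation. So $d(g(x),f(x))$ is bounded only by the mesh of the original triangulation of $X$, which need not be smaller than $\varepsilon$, and refining the domain does not improve this. Property (2) is therefore unjustified, and in fact false in general.

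For a counterexample, let $X$ be the boundary of a $2$-simplex with vertices $w_0,w_1,w_2$, let $A=\emptyset$, and let $f$ be a rotation by a small angle. Take the simplicial approximation from a fine subdivision that sends each vertex $v$ to the vertex of $X$ nearest to $f(v)$. It maps some small edge $\sigma\subset[w_0,w_1]$ linearly onto $[w_0,w_1]$. Hence $g$ has a fixed point in $\sigma$ and $\sigma$ contributes a nonzero diagonal entry, although $f$ is fixed point free.

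The repair is to subdivide the target as well:
\begin{enumerate}
\item Choose a subdivision $(X_1,A_1)$ of mesh $<\varepsilon/2$.
\item Choose a finer subdivision $(X_2,A_2)$ and a simplicial approximation $g:(X_2,A_2)\to(X_1,A_1)$.
\item Take traces of $g_\#\circ\mathrm{Sd}$ on $C_*(X_1,A_1)$.
\end{enumerate}
A nonzero diagonal entry at a $q$-simplex $\sigma\notin A_1$ requires a $q$-simplex $\tau\subset\sigma$ of $X_2$ with $g(\tau)=\sigma$. So the relevant containment is $\tau\subset g(\tau)$, not ``$g(\sigma)$ carried by $\sigma$''. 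Then every $x\in\tau$ satisfies $d(x,f(x))\le d(x,g(x))+d(g(x),f(x))\le 2\,\mathrm{mesh}(X_1)<\varepsilon$.

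Since $A_1$ is a subcomplex, the open simplex of $\sigma$ misses $A$, so $\sigma\subset\overline{X\setminus A}$ outright. This contradicts $d(x,f(x))\ge\varepsilon$ on $\overline{X\setminus A}$ directly, with no need for Brouwer or for a uniform-continuity argument ``near $C$''. With this correction your argument is complete.
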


So, this theorem opens the following question:
\begin{question}
    Given $f:(X,A)\rightarrow (X,A)$, are there necessary or sufficient conditions to guarantee that all the powers of $f$ have no fixed point in $\overline{X\setminus A}$? 
\end{question}
This fact has some relevance in dynamical systems, for example to study if a space is a repeller. As usual, a good point to begin with this problem is to see that the relative Lefschetz number vanishes for all the powers of the map.

\subsection{A first example: $\mathbb{Q}$-acyclic subspaces}
In this subsection we will use the Lefschetz number defined in cohomology (note that both the usual and the relative Lefschetz number agree in homology and cohomology since we are working with rational coefficients). From the equality $$L(f^m,X,A)=L(f^m,X)-L(f^m,A),$$ we see that $L(f^m,X,A)=0$ is equivalent to $L(f^m,X)=L(f^m,A)$. Now, if $A$ is $\mathbb{Q}$-acyclic (that means that all the positive homology groups vanish ---in particular if the space is contractible---), then $L(f^m,A)$ is always one. So, in this case, we must find whether $L(f^m,X)=1$ for all $m$.

As an application of this fact, we can generalize some results in \cite{G-K-N-S}.
\begin{theorem}
    Let $f:(\mathbb{C}P^n,A)\rightarrow (\mathbb{C}P^n,A)$ be a continuous map, where $A\subset \mathbb{C}P^n$ is a closed $\mathbb{Q}$-acyclic neighborhood. Then $L(f^m,\mathbb{C}P^n,A)=0$ for all $m\in\mathbb{Z}^+$ iff $a=0$, where $a$ is as in \cite[Equation 2.4]{G-K-N-S}, that is, the eigenvalue of $f^{\ast 2}$
\end{theorem}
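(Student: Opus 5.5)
Since $A$ is a closed $\mathbb{Q}$-acyclic neighborhood, I would first use the reduction made just before the statement. The Lefschetz number is defined for the pair (it is defined for both $\mathbb{C}P^n$ and $A$), so I apply $L(f^m,\mathbb{C}P^n,A)=L(f^m,\mathbb{C}P^n)-L(f^m,A)$. Acyclicity of $A$ gives $L(f^m,A)=1$ for every $m$, because $f^m|_A$ induces the identity on $H^0(A)=\mathbb{Q}$ and nothing else contributes. The statement therefore reduces to the following claim: $L(f^m,\mathbb{C}P^n)=1$ for all $m\in\mathbb{Z}^+$ iff $a=0$.

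Next I compute $L(f^m,\mathbb{C}P^n)$ in cohomology. We have $H^\ast(\mathbb{C}P^n;\mathbb{Q})=\mathbb{Q}[x]/(x^{n+1})$ with $x$ in degree $2$. Write $f^\ast x=ax$, where $a$ is the eigenvalue of $f^{\ast 2}$ as in \cite[Equation 2.4]{G-K-N-S}. Since $f^\ast$ is a ring homomorphism, $f^\ast(x^k)=a^kx^k$, and for the iterates $(f^m)^\ast x^k=a^{mk}x^k$. All cohomology sits in even degrees, so every sign is $+1$, and
\begin{equation*}
L(f^m,\mathbb{C}P^n)=\sum_{k=0}^{n}a^{mk}=1+a^m+a^{2m}+\cdots+a^{nm}.
\end{equation*}

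If $a=0$, every term except the first vanishes, so $L(f^m,\mathbb{C}P^n)=1$ for all $m$ and hence $L(f^m,\mathbb{C}P^n,A)=0$. Conversely, suppose $L(f^m,\mathbb{C}P^n)=1$ for all $m$. Take $m=1$, so that $\sum_{k=1}^{n}a^k=0$, and $m=2$, so that $\sum_{k=1}^n a^{2k}=0$. The second condition is decisive. The number $a$ is rational (indeed an integer, if integral cohomology is used), so each $a^{2k}\ge 0$, and a sum of nonnegative terms vanishes only if $a^{2}=0$, i.e.\ $a=0$. Alternatively, one can compare zeta functions: $\zeta_f(t)=\prod_{k=0}^n(1-a^kt)^{-1}$ must equal $(1-t)^{-1}$, which forces $a^k=0$ for $k\ge1$.

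The main obstacle is only bookkeeping. I must check that $a$ is real; this holds because $f^{\ast2}$ is an endomorphism of $H^2(\mathbb{C}P^n;\mathbb{Q})=\mathbb{Q}$. I must also justify $L(f^m,A)=1$ carefully: $\mathbb{Q}$-acyclicity together with the finiteness requirement on $H^\ast(A)$ makes this Lefschetz number defined, and any self-map of a connected space induces the identity on $H^0$. The case $n\ge1$ is implicit; for $n=0$ the statement is vacuous in $a$.
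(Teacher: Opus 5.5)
Your proof is correct and follows the paper's own route. You reduce to $L(f^m,\mathbb{C}P^n)=1$ via $L(f^m,\mathbb{C}P^n,A)=L(f^m,\mathbb{C}P^n)-L(f^m,A)$ with $L(f^m,A)=1$, and you use $L(f^m,\mathbb{C}P^n)=1+\sum_{k=1}^n a^{mk}$. The differences are only that you derive this formula from the cup-product structure rather than citing \cite{G-K-N-S}, and that you explicitly justify the final step (through $m=2$ and $a^{2k}\ge 0$, or through the zeta function), which the paper simply asserts.
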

\begin{proof}
    From \cite[Equation 2.4]{G-K-N-S} we have $L(f^m,\mathbb{C}P^n)=1+\sum_{i=1}^n(a^m)^i$. Consequently, $L(f^m,\mathbb{C}P^n,A)=0$ for all $m$ iff $\sum_{i=1}^n(a^m)^i=0$ for all $m$. But this only happens when $a=0$.
\end{proof}
We have an analogous theorem for $\mathbb{H}P^n$:
\begin{theorem}
        Let $f:(\mathbb{H}P^n,A)\rightarrow (\mathbb{H}P^n,A)$ be a continuous map, where $A\subset \mathbb{H}P^n$ is a closed $\mathbb{Q}$-acyclic neighborhood. Then $L(f^m,\mathbb{H}P^n,A)=0$ for all $m\in\mathbb{Z}^+$ iff $a=0$, where $a$ is the eigenvalue of $f^{\ast 4}$.
\end{theorem}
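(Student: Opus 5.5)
The plan is to copy the argument used for $\mathbb{C}P^n$, replacing the degree-$2$ generator with a degree-$4$ generator.

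\textbf{Step 1: the cohomology ring.} Recall that $H^\ast(\mathbb{H}P^n;\mathbb{Q})\cong\mathbb{Q}[x]/(x^{n+1})$ with $\deg x=4$. Since $f^\ast$ is a ring homomorphism, writing $f^{\ast 4}(x)=ax$ gives $f^{\ast 4i}(x^i)=a^ix^i$ for $0\le i\le n$. All other cohomology groups vanish, and $f^{\ast 0}=\mathrm{Id}$ because $\mathbb{H}P^n$ is connected. Applying the same reasoning to $f^m$, whose induced map on $H^4$ is multiplication by $a^m$, we get
\begin{equation*}
L(f^m,\mathbb{H}P^n)=1+\sum_{i=1}^n (a^m)^i .
\end{equation*}
No sign issues arise, since every nonzero group sits in even degree. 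Here $a$ is an integer, because $f^{\ast 4}$ preserves the integral lattice $H^4(\mathbb{H}P^n;\mathbb{Z})\cong\mathbb{Z}$. We will only need that $a$ is real.

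\textbf{Step 2: pass to the relative number.} Because $A$ is $\mathbb{Q}$-acyclic, $L(f^m,A)=1$ for every $m$. Together with $L(f^m,\mathbb{H}P^n,A)=L(f^m,\mathbb{H}P^n)-L(f^m,A)$, this gives
\begin{equation*}
L(f^m,\mathbb{H}P^n,A)=\sum_{i=1}^n a^{mi}.
\end{equation*}

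\textbf{Step 3: show this vanishes for all $m$ iff $a=0$.} If $a=0$, the sum is clearly $0$ for every $m$.

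For the converse, take $m=2$. Then $\sum_{i=1}^n a^{2i}$ is a sum of nonnegative reals containing the term $a^2$, so it vanishes only if $a=0$. An alternative is to note that if the sum vanished for all $m$ with $a\neq 0$, the power sums of the $n$ numbers $a,a^2,\dots,a^n$ would all vanish. Newton's identities would then force all these numbers to be zero, a contradiction.

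The only step that needs any care is the converse in Step 3. The choice $m=2$, together with the fact that $a$ is real, makes it immediate.
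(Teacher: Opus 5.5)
Your proposal is correct and follows the paper's route: the paper proves this case by analogy with $\mathbb{C}P^n$, using $L(f^m)=1+\sum_{i=1}^n a^{mi}$ and $L(f^m,A)=1$ to reduce to $\sum_{i=1}^n a^{mi}=0$ for all $m$. Your derivation of the formula from the ring structure of $H^\ast(\mathbb{H}P^n;\mathbb{Q})$, and your $m=2$ positivity argument (or the Newton-identities alternative), supply details the paper only asserts when it concludes that this ``only happens when $a=0$''.
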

A similar reasoning can be done with the product of spheres. For example:
\begin{theorem}
    Let $f:(S^p\times S^q,A)\rightarrow (S^p\times S^q,A)$ be a continuous map, where $A\subset S^p\times S^q$ is a $\mathbb{Q}$-acyclic neighborhood and $p$ and $q$ are even, with $p\neq q$. In this case, $L(f^m,S^p\times S^q,A)=0$ iff $a=b=0$, where $a$ is the eigenvalue of $f^{\ast p}$ and $b$ is the eigenvalue of $f^{\ast q}$.
\end{theorem}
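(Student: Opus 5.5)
The plan mirrors the $\mathbb{C}P^n$ case. Since $A$ is $\mathbb{Q}$-acyclic, $L(f^m,A)=1$ for every $m$. By the formula $L(f^m,X,A)=L(f^m,X)-L(f^m,A)$, the condition $L(f^m,S^p\times S^q,A)=0$ for all $m$ is therefore equivalent to $L(f^m,S^p\times S^q)=1$ for all $m$. The task reduces to computing $L(f^m,S^p\times S^q)$ in rational cohomology.

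For the computation, the Künneth formula gives $H^\ast(S^p\times S^q;\mathbb{Q})$ as follows. It is $\mathbb{Q}$ in degrees $0,p,q,p+q$ and zero elsewhere; here $p\neq q$ ensures that degrees $p$ and $q$ are distinct and one-dimensional. Let $\alpha\in H^p$ and $\beta\in H^q$ be generators.
\begin{itemize}
\item Since $p\neq q$, the map $f^\ast$ acts by scalars: $f^\ast\alpha=a\alpha$ and $f^\ast\beta=b\beta$.
\item If, say, $q$ is a multiple of $p$, the class $\alpha^{q/p}$ would be a possible extra term in $f^\ast\beta$. But $\alpha^2=0$ because $\alpha$ pulls back from $S^p$, and $q\neq p$ is not $p$ itself, so no such mixing occurs; $H^q$ is simply one-dimensional and $f^\ast$ acts on it by a scalar regardless.
\item Multiplicativity of $f^\ast$ gives $f^\ast(\alpha\beta)=ab\,\alpha\beta$ on the top class.
\item The same holds for $f^m$, with eigenvalues $a^m$, $b^m$, $(ab)^m$.
\end{itemize}
Since $p$, $q$ and $p+q$ are all even, every sign in the Lefschetz number is $+$, and
\[
L(f^m,S^p\times S^q)=1+a^m+b^m+a^mb^m=(1+a^m)(1+b^m).
\]

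The condition then becomes $a^m+b^m+(ab)^m=0$ for all $m\geq 1$, where $a,b$ are rational (indeed integers, via integral cohomology modulo torsion). If $a=b=0$ this clearly holds. Conversely, take $m=2$. Then $a^2+b^2+a^2b^2=0$ is a sum of nonnegative reals, so $a=b=0$. Alternatively one can argue via power sums: the numbers $a,b,ab$ have all power sums zero, so by Newton's identities all three vanish.

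The only delicate point is justifying that $f^\ast$ is diagonal with top eigenvalue $ab$, i.e.\ that no cross terms arise. Since the statement's wording suggests it is meant for all $m$, I read it as quantified over all $m\in\mathbb{Z}^+$, matching the previous theorems.
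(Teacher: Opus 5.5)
Your proposal is correct and follows the paper's proof. Both reduce, via $\mathbb{Q}$-acyclicity of $A$, to $L(f^m,S^p\times S^q)=1$ for all $m$, use $L(f^m,S^p\times S^q)=1+a^m+b^m+(ab)^m$, and conclude $a=b=0$; the only difference is that the paper simply cites this formula, while you derive it from the Künneth formula and multiplicativity of $f^\ast$. Your $m=2$ sum-of-squares argument fills in the final step the paper leaves implicit. You are also right that the statement must be read for all $m$: for $m=1$ alone, $a=b=-2$ also gives $(1+a)(1+b)=1$.
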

\begin{proof}
    From \cite[Case 3]{G-K-N-S}, we know that $$L(f^m,S^p\times S^q)=1+a^m+b^m+(ab)^m,$$ so $L(f^m,S^p\times S^q)=1$ for all $m$ iff $a=b=0$.
\end{proof}
The same idea works for the other cases in \cite{G-K-N-S}.
\subsection{The sphere and an equator ring.} Another example is the pair $(S^n,U)$, where $U$ is a ring neighborhood of the equator $S^{n-1}$ of $S^n$. All the relative homology groups vanish except $H_n(S^n,U)=\mathbb{Q}\oplus \mathbb{Q}$. Now, the equality
\begin{equation}\label{formula alternativa zeta}
    \zeta_f(t)=\prod_{k=0}^{n}\mathrm{det}(\mathrm{Id}_k-tf_{\ast k})^{(-1)^{k+1}}
\end{equation}
still holds with the relative Lefschetz number \cite[Proposition 5.13]{F}, where
\begin{equation*}
    \zeta_f(t):=\mathrm{exp}\biggl(\sum_{m\geq 1}\frac{L(f^m,S^n,U)}{m}t^m\biggr).
\end{equation*}
Thus, if we want to obtain $\zeta_f(t)=1$ (that is, $L(f^m)=0$ for all $m$), we must have 
\begin{equation}\label{eq esfera}
    \mathrm{det}\begin{pmatrix}
          1-ta&-tb\\
          -tc&1-td\\
        \end{pmatrix}=1,
\end{equation}
where 
\begin{equation*}
    \begin{pmatrix}
        a&b\\
        c&d\\
    \end{pmatrix}
\end{equation*}
is the matrix induced by $f$ in $H_n(S^n,U)$.
\begin{theorem}
    Let $f:(S^n,U)\rightarrow (S^n,U)$ be a map without periodic points in $\overline{S^n\setminus U}$. Then $a=-d$ and $bc=-a^2$, where
    \begin{equation*}
    \begin{pmatrix}
        a&b\\
        c&d\\
    \end{pmatrix}
\end{equation*}
is the matrix induced by $f$ in $H_n(S^n,U)$.
\end{theorem}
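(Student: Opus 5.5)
The plan is to reduce the statement to the determinant identity \eqref{eq esfera}, using the contrapositive of Theorem~\ref{thm pto fijo relativo} applied to every iterate, and then compare coefficients.

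\textbf{Step 1: all relative Lefschetz numbers vanish.} Fix $m\in\mathbb{Z}^+$. Since $f(U)\subset U$, the iterate $f^m$ is again a map of pairs $(S^n,U)\rightarrow(S^n,U)$. We may take $U$ to be a subcomplex of a suitable triangulation of $S^n$, for instance a closed collar of the equator. A fixed point of $f^m$ in $\overline{S^n\setminus U}$ would be a periodic point of $f$ there, which the hypothesis excludes. So Theorem~\ref{thm pto fijo relativo} gives $L(f^m,S^n,U)=0$ for every $m$.

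\textbf{Step 2: the zeta function is trivial.} By Step~1, every exponent in the definition of $\zeta_f(t)$ vanishes, so $\zeta_f(t)=1$. Now use formula \eqref{formula alternativa zeta}, which holds for the relative Lefschetz numbers by \cite[Proposition 5.13]{F}. The only nonzero relative homology group is $H_n(S^n,U)=\mathbb{Q}^2$, so the product reduces to a single factor:
\[
\mathrm{det}(\mathrm{Id}-tf_{\ast n})^{(-1)^{n+1}}=1 .
\]
Whatever the sign of the exponent, a rational function raised to $\pm1$ equals $1$ only if the function itself equals $1$. This yields exactly \eqref{eq esfera}.

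\textbf{Step 3: compare coefficients.} Expanding the determinant gives
\[
(1-ta)(1-td)-t^2bc=1-(a+d)t+(ad-bc)t^2 .
\]
Setting this polynomial equal to $1$ identically forces the coefficient of $t$ and of $t^2$ to vanish:
\[
a+d=0,\qquad ad-bc=0 .
\]
The first equation gives $d=-a$. Substituting into the second gives $-a^2-bc=0$, that is, $bc=-a^2$.

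The only point that needs care is Step~1, namely justifying that Theorem~\ref{thm pto fijo relativo} applies to every iterate. This requires that $U$ can be realized as a subcomplex of a triangulation of $S^n$, and that $f^m$ preserves $U$, which follows directly from $f$ being a map of pairs. The remaining steps are algebraic.
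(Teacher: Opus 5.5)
Your proposal is correct and follows the paper's own route: Theorem~\ref{thm pto fijo relativo} applied to every iterate makes all relative Lefschetz numbers vanish, so $\zeta_f(t)=1$ and you get \eqref{eq esfera}, and comparing the coefficients of $t$ and $t^2$ gives $a+d=0$ and $ad-bc=0$. You are in fact more explicit than the paper about two steps it leaves implicit: applying the relative fixed point theorem to each iterate, and discarding the exponent $(-1)^{n+1}$.
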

\begin{proof}
    From Equation~\eqref{eq esfera}, we must have 
    \begin{equation*}
        t^2(ad-bc)-t(a+d)=0, 
    \end{equation*}
    so the result follows.
\end{proof}
\subsection{The sphere and the solid torus.} The problem of the previous example is that the pair $(S^n,U)$ does not satisfy the usual hypotheses required to have the relative Wecken property \cite[Theorem 6.7]{Zhao} so we cannot even assure that $L(f^m,S^n,U)=0$ for all $m\in\mathbb{Z}^+$ will imply that $f$ has not fixed points in $\overline{S^n\setminus U}$.

This is the reason why we now present another example where the hypotheses in \cite[Theorem 6.7]{Zhao} are satisfied. For $n\geq 3$, let us consider the pair $(S^n, ST)$, where $ST$ is an inmersed solid torus $S^1\times D^{n-1}$ (we can visualize it in the case of the sphere $S^3$, that descomposes into two solid tori). The relative homology groups of this pair vanish except for $H_n(S^n,ST)=\mathbb{Q}$ and $H_2(S^n,ST)=\mathbb{Q}$.

We obtain the following theorem:

\begin{theorem}\label{thm ultimo teor}
    Let $f:(S^n,ST)\rightarrow (S^n,ST)$ be a continuous map without periodic points in $\overline{S^n\setminus ST}$. Then one of the following conditions holds:
    \begin{enumerate}
        \item If $n$ is odd, then the eigenvalue of $f_{\ast2}$ is the same as that of $f_{\ast n}$.
        \item If $n$ is even, then both $f_{\ast2}$ and $f_{\ast n}$ are zero.
    \end{enumerate}
\end{theorem}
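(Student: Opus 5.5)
The plan follows the pattern of the previous two examples. If $f$ has no periodic points in $\overline{S^n\setminus ST}$, then no iterate $f^m$ has a fixed point there. By the contrapositive of Theorem~\ref{thm pto fijo relativo} applied to each $f^m\colon(S^n,ST)\to(S^n,ST)$, this gives $L(f^m,S^n,ST)=0$ for every $m\in\mathbb{Z}^+$. Hence the relative Lefschetz zeta function satisfies $\zeta_f(t)=1$. As in the equator-ring example, we then use the product formula \eqref{formula alternativa zeta}, which holds for the relative Lefschetz number by \cite[Proposition 5.13]{F}. We do not need the Wecken property of the pair here; it only matters for a converse.

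Next we insert the relative homology. The only nonzero groups are $H_2(S^n,ST)=\mathbb{Q}$ and $H_n(S^n,ST)=\mathbb{Q}$. Let $\lambda$ be the eigenvalue of $f_{\ast 2}$ and $\mu$ the eigenvalue of $f_{\ast n}$. Then \eqref{formula alternativa zeta} reduces to
\begin{equation*}
\zeta_f(t)=(1-t\lambda)^{-1}(1-t\mu)^{(-1)^{n+1}}=1 .
\end{equation*}
We then split according to the parity of $n$.

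If $n$ is odd, the exponent $(-1)^{n+1}$ equals $1$. The condition becomes $1-t\mu=1-t\lambda$ as polynomials in $t$, so $\lambda=\mu$. This is condition (1).

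If $n$ is even, the exponent is $-1$, and we get $(1-t\lambda)(1-t\mu)=1$. Expanding gives $1-t(\lambda+\mu)+t^2\lambda\mu=1$, so $\lambda+\mu=0$ and $\lambda\mu=0$. Together these force $\lambda=\mu=0$, so both $f_{\ast2}$ and $f_{\ast n}$ vanish. This is condition (2).

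As a check, one can bypass the zeta function and read the same conclusions directly from $L(f^m,S^n,ST)=\mu^m-\lambda^m=0$ when $n$ is odd, and $\lambda^m+\mu^m=0$ for all $m$ when $n$ is even.

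The step I expect to need the most care is the computation of the relative homology groups, which the paper only asserts. I would take them as given, but they can be verified from the long exact sequence of the pair, using excision to $(D^n\setminus\mathrm{int}\,ST,\partial)$ together with $H_\ast(ST)\cong H_\ast(S^1)$. This yields $H_2(S^n,ST)\cong H_1(ST)=\mathbb{Q}$ and $H_n(S^n,ST)\cong H_n(S^n)=\mathbb{Q}$; for $n=3$ the two degrees are still distinct. Beyond that, the argument is only the short polynomial identity above.
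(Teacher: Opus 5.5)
Your proposal is correct and follows the paper's proof: vanishing of $L(f^m,S^n,ST)$ for all $m$ gives $\zeta_f(t)=1$, and the product formula reduces this to $(1-t\mu)^{(-1)^{n+1}}=1-t\lambda$, which you then split by the parity of $n$, as the paper does. One small slip: in your direct check the sign is reversed, since $L(f^m,S^n,ST)=\lambda^m-\mu^m$ for odd $n$; this is harmless because only its vanishing is used.
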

\begin{proof}
   From Equation~\ref{formula alternativa zeta}, $f$ must satisfy
   \begin{equation*}
       \frac{(1-tf_{\ast n})^{(-1)^{n+1}}}{1-tf_{\ast 2}}=1.
   \end{equation*}
   The result follows then easily.
\end{proof}


\begin{thebibliography}{99}
    \bibitem{A-B} M. Arkowitz, R. F. Brown (2004). \emph{The Lefschetz-Hopf theorem and axioms for the Lefschetz number}, Fixed Point
Theory Appl., \textbf{1}, 1--11. 
    	\bibitem{Brown} R. F. Brown (1971). \emph{The Lefschetz Fixed Point Theorem}, Scott, Foresman and Company.
        \bibitem{F} J. Franks (1982). \emph{Homology and Dynamical Systems}, CBSM Regional Conf. Ser. in Math., \textbf{49}, Amer. Math. Soc.
        \bibitem{G-L} J. L. García Guirao, J. Llibre (2011). \emph{On the Lefschetz periodic point free continuous self-maps on connected
 compact manifolds}, Topology Appl., \textbf{158}, 2165–2169.
 \bibitem{G-L2} J. L. García Guirao, J. Llibre (2013), \emph{Periodic Structure of Transversal Maps on $\mathbb{C}P^n$, $\mathbb{H}P^n$ and $S^p\times S^q$}, Qual. Theory Dyn. Syst., \textbf{12}, (2), 417--425.
 \bibitem{G-L3} J. L. García Guirao, J. Llibre (2016), \emph{Periods of continuous maps on some compact spaces}, Houston J. Math., \textbf{42}, (3), 1047--1058. 
 \bibitem{G-S-U} M. J. González, V. F. Sirvent, R, Urzúa (2025). \emph{Homological data on the periodic structure of self-maps on wedge sums}, Qual. Theory Dyn. Syst., \textbf{24} (3), no. 127, 21 pp.
 \bibitem{Gorniewicz}
L.~G{\'o}rniewicz (2005), \emph{On the Lefschetz Fixed Point Theorem}, in: Handbook of Topological Fixed Point Theory, Springer, 43--82.
        \bibitem{G-K-N-S} G. Graff, A. Kaczkowska, P. Nowak-Przygodzki, J. Signerska (2012). \emph{Lefschetz periodic point free self-maps of compact manifolds}, Topology Appl., \textbf{159}, 2728--2735.
 \bibitem{H-R} M. Horecka, P. Ra\'zny (2022), \emph{A criterion for the existence of periodic points based on the eigenvalues of maps induced in cohomology}, Qual. Theory Dyn. Syst, \textbf{21}, (2), Art. 49, 12 pp.
 \bibitem{Jiang} B. J. Jiang (1983). \emph{Lectures on Nielsen Fixed Point Theory}, Contemporary Mathematics (14). American Mathematical Society.
 \bibitem{L} J. Llibre (2012). \emph{Periodic point free continuous self-maps on graphs and surfaces}, Topology Appl., \textbf{159}, 2228–2231.
 \bibitem{L-S1} J. Llibre, V. F. Sirvent (2013), \emph{Partially periodic point free self-maps on graphs, surfaces and other spaces}, J. Difference Equ. Appl., \textbf{19} (10), 1654--1662. 
 \bibitem{L-S2} J. Llibre, V. F. Sirvent (2018), \emph{On Lefschetz periodic point free self-maps}, J. Fixed Point Theory Appl.\textbf{20} (1), Art. 38 9 pp. 
        \bibitem{S2} V. F. Sirvent (2020). \emph{Partially Periodic Point Free Self-Maps on Product of Spheres and Lie Groups}, Qualitative Theory of Dynamical Systems, \textbf{19} (3) Art. 84, 12 pp.
        \bibitem{S1} V. F. Sirvent (2023). \emph{On partially periodic point free self-maps on the wedge sums of spheres}, J. Difference Equ. Appl., \textbf{29} (1), 19--31.
        
        \bibitem{Zhao}
X. Zhao (2005), \emph{Relative Nielsen Theory}, in: Handbook of Topological Fixed Point Theory, Springer, 659--684.
	\end{thebibliography}
\end{document}